\newtheorem{theorem}{Theorem}[section]
\newtheorem{conjecture}[theorem]{Conjecture}
\newtheorem{lemma}[theorem]{Lemma}
\newtheorem{problem}[theorem]{Problem}
\newtheorem{question}[theorem]{Question}
\newenvironment{proof}[1][Proof]{\noindent\textbf{#1.} }
{\hfill \ \rule{0.5em}{0.5em}}
\begin{document}

\title{Triangle-free induced subgraphs of polarity graphs}
\author{Jared Loucks\thanks{Department of Mathematics and Statistics, California State University Sacramento, \texttt{jaredloucks@csus.edu}}
\and 
Craig Timmons\thanks{Department of Mathematics and Statistics, California State University Sacramento, \texttt{craig.timmons@csus.edu}.
Research supported in part by Simons Foundation Grant \#359419.}}
\date{}

\maketitle

\begin{abstract}
Given a finite projective plane $\Pi$ and a polarity $\theta$ of $\Pi$, the corresponding polarity graph 
is the graph whose vertices are the points of $\Pi$.  Two distinct 
vertices $p$ and $p'$ are adjacent if $p$ is incident to $\theta (p')$.  Polarity graphs have been 
used in a variety of extremal problems, perhaps the most well-known 
being the Tur\'{a}n number of the cycle of length four.
We investigate the problem of finding the maximum number of vertices 
in an induced triangle-free subgraph of a 
polarity graph.  Mubayi and Williford showed that 
when $\Pi$ is the projective geometry $PG(2,q)$
and $\theta$ is the orthogonal polarity, an induced triangle-free subgraph 
has at most $\frac{1}{2}q^2 + O(q^{3/2})$ vertices.  
We generalize this result to all polarity graphs, and provide some interesting computational 
results that are relevant to an unresolved conjecture of Mubayi and Williford.      
\end{abstract}


\section{Introduction}

Let $\Pi = ( \mathcal{P} , \mathcal{L} , \mathcal{I} )$ be a finite projective plane.
A polarity $\theta$ of $\Pi$ is a bijection of order two 
that maps $\mathcal{P}$ to $\mathcal{L}$, maps $\mathcal{L}$ to $\mathcal{P}$, 
and has the property that for any point $p$ and line $l$,  
\[
p \mathcal{I} l ~~ \mbox{if and only if} ~~  \theta (l) \mathcal{I} \theta (p).
\]  
Polarities in projective planes have a rich history in finite geometry.
For further discussion, 
we recommend Hughes and Piper (\cite{hp}, Chapter 12) or Dembowski (\cite{demb}, Chapter 3).  
Given a finite projective plane $\Pi$ and a polarity $\theta$ of $\Pi$, the corresponding \emph{polarity graph}, 
denoted $G( \Pi , \theta )$, is the graph whose vertex set is 
$\mathcal{P}$.  Two distinct vertices $p$ and $p'$ are 
adjacent if and only if $p \mathcal{I} \theta (p' )$.

Let $q$ be a power of a prime.  The special case when the plane $\Pi$ is  
$PG(2 , q)$ and $\theta$ is the polarity that maps a subspace to its orthogonal complement appears 
frequently in combinatorics.  
This graph, which we denote by $ER_q$, was introduced to the graph theory 
community by Erd\H{o}s, R\'{e}nyi \cite{er1}, 
Brown \cite{brown}, and Erd\H{o}s, R\'{e}nyi, and S\'{o}s \cite{ers}.  Since 
then, $ER_q$ has appeared in many different contexts 
such as Ramsey theory, spectral and structural graph theory, and 
Tur\'{a}n problems.  For instance, $ER_q$ has the maximum number of edges among all graphs
with $q^2  + q + 1$ vertices 
that have no cycle of length four.  This was proved by F\"{u}redi \cite{fur} and is one of the 
most important results concerning bipartite Tur\'{a}n problems.  
In fact, any polarity graph $G( \Pi , \theta )$ where $\Pi$ has order $q$ has this same 
property provided that the number of absolute points of $\theta$ is $q +1$.  Such a polarity is called 
\emph{orthogonal}.  A classical result of Baer \cite{baer} states that any polarity of a projective plane of 
order $q$ has at least $q+1$ absolute points.  Thus, orthogonal polarities are the ones that have 
the fewest number of absolute points.     

A consequence of its significance in graph theory
is that different properties of  
$ER_q$ have been studied.  
In \cite{er1, ers} it is shown that 
$ER_q$ has $\frac{1}{2}q^2 (q + 1)$ edges, has diameter 2, and does not contain a cycle of length four.
In general, this is true for any polarity graph $G( \Pi , \theta )$ for which $\theta$ is orthogonal.  
The automorphism group of $ER_q$ was determined by Parsons \cite{parsons}, and then again 
by Bachrat\'{y} and \v{S}ir\'{a}\v{n} \cite{bs} who provided simpler proofs.  The independence number 
and chromatic number of $ER_q$ was studied in \cite{godsil, hobart, mw, w thesis} and \cite{ptt}, respectively.  

In this note, we consider the following problem of Mubayi and Williford \cite{mw}.

\begin{problem}\label{problem}
Determine the maximum number of vertices in an induced subgraph of $ER_q$ that contains no cycle of length three.
\end{problem}

One of the motivations behind Problem \ref{problem} comes from Tur\'{a}n theory.  
Let us write $\textup{ex}(n , \{ C_3 , C_4 \} )$ for the maximum number 
of edges in an $n$-vertex graph with no cycle of length 3 or 4.  
Note that such a graph has girth at least 5.  
The incidence graph of a projective plane has girth at least 5.
Erd\H{o}s \cite{erdos conjecture} has conjectured that this construction is asymptotically best possible; that is 
\begin{equation}\label{erdos conjecture}
 \textup{ex}(n , \{C_3 , C_4 \} ) = \frac{1}{ 2 \sqrt{2} }n^{3/2} + o(n^{3/2} ) ~~?
 \end{equation}  
 It was recently conjectured by Allen, Keevash, Sudakov, and  Verstra\"{e}te \cite{aksv} that
 (\ref{erdos conjecture}) can be improved.  More precisely, if $z(n , C_4)$ is the maximum number 
 of edges in an $n$-vertex bipartite graph with no cycle of length 4, then Allen et.\ al.\ conjecture that 
 \[
 \liminf_{n \rightarrow \infty} \frac{ \textup{ex}(n , \{C_3 , C_4 \} ) }{ z (n , C_4) } > 1 ~~?
 \]
 The best known lower bound on $\textup{ex}(n , \{C_3 , C_4 \} )$ comes from 
 an induced triangle free subgraph of $ER_q$ and shows that for infinitely many $n$,  
 \[
 \textup{ex}( n , \{C_3 , C_4 \} ) > z( n , C_4 ) + \frac{1}{8} n + O( \sqrt{n} ).
 \]
 This construction is due to Parsons \cite{parsons} and will be discussed momentarily. 

Let us now return to Problem \ref{problem}.       
Mubayi and Williford \cite{mw} showed that for any $q$, 
the maximum number of vertices in an induced triangle-free subgraph of $ER_q$ is at most 
\[
\frac{1}{2}q^2 + q^{3/2} + O(q).
\]
Using an approach based on finite geometry, we generalize this upper bound to all 
polarity graphs.  

\begin{theorem}\label{main theorem}
Let $\Pi$ be a projective plane of order $q$, $\theta$ be a polarity of $\Pi$, and $G(\Pi,\theta)$ be the corresponding polarity graph.
If $H$ is an induced triangle-free subgraph of $G( \Pi , \theta)$, then  
\[
|V(H)| \leq \frac{1}{2} ( q^2 + q + 1) + \sqrt{q} \left( \frac{ q^2 + q + 1}{q + 1} \right) .
\]
\end{theorem}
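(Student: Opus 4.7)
The approach I would take is spectral. Associated to the polarity is the $n \times n$ matrix $N$ (with $n = q^2+q+1$) indexed by the points of $\Pi$, where $N_{p,p'} = 1$ if $p$ is incident to $\theta(p')$ and $0$ otherwise. The polarity identity makes $N$ symmetric, and counting points on two polars gives $N^2 = qI + J$: the $(p,p'')$-entry of $N^2$ counts $p' \in \theta(p)\cap\theta(p'')$, which is $q+1$ if $p=p''$ and $1$ otherwise. Consequently $N$ has eigenvalue $q+1$ on the all-ones vector and $\pm\sqrt{q}$ on its orthogonal complement. Noting that $\mathbf{1}_S^\top N \mathbf{1}_S = 2e(H) + |S \cap A|$, where $A$ is the set of absolute points of $\theta$ and $S=V(H)$ has size $s$, the standard eigenvalue estimate would give the lower bound
\[
2 e(H) + |S \cap A| \;\geq\; \frac{(q+1)s^2}{n} - \sqrt{q}\cdot\frac{s(n-s)}{n}.
\]

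For the complementary upper bound I would exploit triangle-freeness line by line. For each $p\in S$, the neighborhood $(S\cap\theta(p))\setminus\{p\}$ must be an independent set in $G$. Adjacency on a line $l$ is governed by the involution $\phi_l(x) = \theta(x)\cap l$ on $l\setminus\{\theta(l)\}$: two distinct points in the domain are adjacent in $G$ exactly when $\phi_l$ swaps them, and the fixed points of $\phi_l$ are precisely the absolute points of $l$ other than $\theta(l)$. Writing $a(l)$ for $|l\cap A|$, the largest independent set in $l\setminus\{\theta(l)\}$ thus has at most $(q+a(l))/2$ elements, producing the uniform estimate
\[
|S \cap \theta(p)| \;\leq\; \frac{q+1+a(\theta(p))}{2}.
\]
Summing over $p \in S$ and using $\sum_{p \in S} |S \cap \theta(p)| = 2e(H) + |S\cap A|$ yields an upper bound on $2e(H)+|S\cap A|$ in terms of $(q+1)s + \sum_{p\in S} a(\theta(p))$.

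Combining the two bounds and rearranging produces the inequality
\[
(q+1)(2s-n) \;\leq\; 2\sqrt{q}(n-s) + \frac{n}{s}\sum_{p \in S} a(\theta(p)),
\]
so the remaining task is to control $\sum_{p \in S} a(\theta(p))$. A re-indexing identifies this sum with $\mathbf{1}_A^\top N \mathbf{1}_S$, to which a second application of the spectral estimate applies, giving a bound of the form $(q+1)|A|s/n + \sqrt{q}\sqrt{|A|s(1-|A|/n)(1-s/n)}$. Substituting and solving for $s$ should deliver the claimed inequality. I expect the most delicate part of the argument to be matching the exact coefficient $\sqrt{q}(q^2+q+1)/(q+1)$: both applications of the spectral estimate contribute error terms of order $\sqrt{q}$, and these must be combined cleanly so that the absolute-point correction balances against the spectral slack to give precisely the stated constant.
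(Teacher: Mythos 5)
Your skeleton is exactly the one the paper uses: your matrix $N$ is the adjacency matrix of the graph $G^{\circ}(\Pi,\theta)$ obtained by adding a loop at each absolute point, the identity $N^2=qI+J$ gives the eigenvalues $q+1$ and $\pm\sqrt{q}$, your eigenvalue estimate is the Expander Mixing Lemma, and your involution $\phi_l$ is precisely the paper's Lemma 2.2 (the non-absolute neighbours of a non-absolute point induce a matching). So the strategy is right. The gap is in how you handle absolute points, and it is fatal to the stated constant.

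First, your ``uniform estimate'' $|S\cap\theta(p)|\le\frac{q+1+a(\theta(p))}{2}$ fails when $p$ is an absolute point: in that case $\theta(p)\setminus\{p\}$ is entirely independent (this is the paper's Lemma 2.1 -- absolute points lie in no triangle), so $S$ may contain all of $\theta(p)$. Second, and more seriously, even for non-absolute $p$ the correction $+a(\theta(p))$ enters with the wrong sign for your purposes. You have $\sum_{p\in S}a(\theta(p))=\sum_{z\in A}|S\cap\theta(z)|$, which for a unitary polarity ($|A|=q^{3/2}+1$) and $s\approx n/2$ is of order $q^{5/2}$ -- the same order as your main error term $2\sqrt{q}(n-s)$. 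Solving your displayed inequality then yields $s\le\frac{n}{2}+cq^{3/2}$ with $c$ strictly larger than $1$, not the stated coefficient; your closing worry about matching constants is justified, and the second spectral estimate does not rescue it. The repair is to restrict from the outset to $S$ containing no absolute points, as the paper does (and as the problem is really posed; cf.\ Conjecture 1.4). Then $\mathbf{1}_S^\top N\mathbf{1}_S=2e(H)$ exactly, and since $S$ cannot use the $a(\theta(p))$ absolute points of the line $\theta(p)$, the remaining $q+1-a(\theta(p))$ points are paired up by $\phi_{\theta(p)}$ and triangle-freeness gives $|S\cap\theta(p)|\le\frac{q+1-a(\theta(p))}{2}\le\frac{q+1}{2}$, with a minus sign. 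All correction terms disappear and the computation closes with exactly the claimed bound. (You are in good company in worrying about this point: the paper's own proof also treats only the absolute-point-free case.)
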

    
As for lower bounds, Parsons \cite{parsons} showed that when $q$ is a power of an odd prime, $ER_q$ contains an induced
triangle-free subgraph on $\binom{q}{2}$ vertices if $q \equiv 1 ( \textup{mod}~4)$, and on
$\binom{q +1}{2}$ vertices if $q \equiv 3 ( \textup{mod}~4)$.
By the above mentioned result of Mubayi and Williford \cite{mw}, 
the construction of Parsons is asymptotically best possible.  The following was conjectured 
in \cite{mw} and asserts that one cannot do better than Parsons' construction.  

\begin{conjecture}[Mubayi, Williford \cite{mw}]\label{mw conjecture}
Let $q$ be a power of an odd prime.  The 
maximum number of vertices in an induced triangle-free subgraph of $ER_q$ containing 
no absolute points is 
$\binom{q}{2}$ if $q \equiv 1 ( \textup{mod}~4)$, and $\binom{q+1}{2}$ if 
$q \equiv 3 ( \textup{mod}~4)$.
\end{conjecture}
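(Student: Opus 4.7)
The plan is to prove the matching upper bound to Parsons' construction by combining a spectral analysis of $ER_q$ with an algebraic count of self-polar triangles. Throughout, let $q$ be a power of an odd prime, let $N$ be the point-line incidence matrix of $PG(2,q)$, and let $B$ be the matrix with $B_{p,p'} = 1$ iff $p \in \theta(p')$. Then $B = A + D$, where $A$ is the adjacency matrix of $ER_q$ and $D$ is the diagonal indicator of absolute points, and $NN^{T} = qI + J$ yields $B^{2} = qI + J$. The spectrum of $B$ is $\{q+1, \sqrt{q}, -\sqrt{q}\}$ with multiplicities $1$, $\tfrac{1}{2}(q^{2}+q)$, $\tfrac{1}{2}(q^{2}+q)$.

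First I would establish the spectral lower bound on the edge count of $ER_q[S]$ for a candidate extremal set $S$ of non-absolute points. Since $D\chi_S = 0$, we have $\chi_S^{T} A \chi_S = \chi_S^{T} B \chi_S$, and expanding $\chi_S$ in the eigenbasis of $B$ yields
\[
2\, e(ER_q[S]) \;\ge\; \frac{(q+1)|S|^{2}}{q^{2}+q+1} \;-\; \sqrt{q}\left(|S| - \frac{|S|^{2}}{q^{2}+q+1}\right).
\]
Triangle-freeness supplies the Mantel upper bound $e(ER_q[S]) \le |S|^{2}/4$, which combined with the above yields Mubayi--Williford's estimate $|S| \le \tfrac{1}{2}q^{2} + O(q^{3/2})$. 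The gap to the conjectured value $\binom{q+1}{2}$ or $\binom{q}{2}$ is of order $q^{3/2}$.

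To close this gap, I would go beyond the second-moment spectral bound and count self-polar triangles directly. By the polarity axioms, each triangle of $ER_q$ is a self-polar triangle of $\Pi$, and conversely each self-polar triangle with three non-absolute vertices is a triangle of $ER_q$. If $S$ induces a triangle-free subgraph, then $S$ contains no self-polar triangle. Using the identity $B^{2} = qI + J$ and triple-counting of walks, the number of self-polar triangles inside $S$ admits an expression as a character sum over the quadratic form $Q(x) = x_1^{2} + x_2^{2} + x_3^{2}$ on $\mathbb{F}_q^{3}$, exploiting the transitive action of $O_3(\mathbb{F}_q)$ on these triangles. A Weil-type bound for this character sum, combined with the vanishing count, should yield an improved inequality linear in $|S|$ that refines the $O(q^{3/2})$ correction to the conjectured $\pm q/2$.

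The main obstacle, and the reason the conjecture remains open, is precisely this character-sum step: the Weil bound in its standard form loses a factor of $\sqrt{q}$, so one needs either an averaging argument over a pencil of conics or, more plausibly, a stability statement identifying near-extremal $S$ with the partition of non-absolute points according to whether $Q(x)$ is a square or a non-square in $\mathbb{F}_q^{*}$. I would first attempt this stability step: assume $|S|$ exceeds the conjectured bound by $\omega(\sqrt{q})$, deduce from tightness in the spectral inequality that $\chi_S$ is almost supported on one $\pm\sqrt{q}$-eigenspace of $B$, and show that such near-eigenvectors are $O(\sqrt{q})$-close to the indicator of Parsons' set, after which a direct Gauss-sum evaluation produces the $q \bmod 4$ dichotomy. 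Pushing the symmetric-difference error in this stability step strictly below $\sqrt{q}$ is the heart of the difficulty, since anything weaker is absorbed by the spectral slack and leaves the conjectured bound out of reach.
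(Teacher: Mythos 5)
There is a fundamental problem here: the statement you are trying to prove is a \emph{conjecture}, not a theorem, and the paper does not prove it. Worse, the paper's computational results show that the conjecture as stated is \emph{false} for small $q$: for $q=5$ the conjectured maximum is $\binom{5}{2}=10$ but the paper exhibits an induced triangle-free subgraph on $16$ non-absolute vertices, and the conjectured bounds are also exceeded for $q=7,9,13$ (by $2$, $10$, and $2$ respectively). So no argument can establish the statement as written; at best one could hope to prove it for all sufficiently large $q$, and your proposal would need to say where a lower bound on $q$ enters. Your sketch also candidly admits that the decisive step (the character-sum/stability argument) is exactly the open part, so even on its own terms this is a research plan rather than a proof.

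There is also a concrete technical error in the part you present as established. You claim that the spectral lower bound $2e(S)\ge \frac{(q+1)|S|^2}{q^2+q+1}-\sqrt{q}\,|S|$ combined with Mantel's bound $e(S)\le |S|^2/4$ recovers $|S|\le \tfrac12 q^2+O(q^{3/2})$. It does not: dividing through by $|S|$ gives $|S|\left(\frac{q+1}{q^2+q+1}-\frac12\right)\le\sqrt{q}$, and the coefficient of $|S|$ is negative for $q\ge 2$, so the inequality holds vacuously and yields no upper bound at all. The actual argument (Mubayi--Williford, and Theorem \ref{main theorem} of this paper) does not use Mantel. It uses the $C_4$-freeness of the polarity graph: the neighborhood of any non-absolute point induces a matching (Lemma \ref{lemma 2}), so a vertex of a triangle-free induced subgraph $J$ has at most $\frac{q+1}{2}$ neighbors inside $J$, giving the \emph{linear} bound $e(J,J)\le |J|\cdot\frac{q+1}{2}$; it is this linear bound, played against the quadratic spectral lower bound, that forces $|J|\le\frac12(q^2+q+1)+\sqrt{q}\,\frac{q^2+q+1}{q+1}$. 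You would need to repair this step before any refinement of the $O(q^{3/2})$ error term could even be attempted.
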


We remark that the reason for excluding absolute points is that 
in any polarity graph, a vertex 
that is an absolute point will not lie in a triangle.  We prove this in the next section and it is a known result.

Our computational results show that if Conjecture \ref{mw conjecture} is true, then one must assume some lower 
bound on $q$ as the conjecture fails for small values of $q$.  
These new lower bounds are summarized in the following table where we write 
$f( ER_q)$ for the maximum number of vertices in an induced triangle-free subgraph of $ER_q$ that contains 
no absolute points.  Those values marked with a * indicate an improvement over Parsons' construction.     

\begin{center}
\begin{tabular}{|c | c |  } \hline
$q$ & $ f( ER_q)$    \\ \hline \hline
3 & $ =6 $   \\
5* & =16    \\ 
7* & $ \geq 30 $ \\ 
9* & $ \geq 46  $  \\
11 & $\geq 66 $  \\
13* &  $\geq 80$ \\ \hline
\end{tabular}
\end{center}

For comparison with Conjecture \ref{mw conjecture}, our lower bound for 7, 9, and 13 exceeds the conjectured bound 
by 2, 10, and 2, respectively.  The lower bound for 5 was done by a simple brute force search argument but for 
larger $q$, such a search is impossible.  A Mathematica \cite{mathematica} notebook file giving these lower bounds is available 
on the second listed author's website \cite{file}.    
 
When $q$ is a power of 2,  
Mattheus, Pavese, and Storme \cite{mps}
recently proved that $ER_q$ contains 
an induced subgraph of girth at least 5 with $\frac{q ( q + 1)}{2}$ vertices.  
This answers a question of Mubayi and Williford \cite{mw}. 
Another polarity graph of interest is the unitary polarity graph $U_q$.  
If $q$ is an even power of a prime, the graph $U_q$ has the same 
vertex set as $ER_q$.  Let us write $(x_0 , x_1 , x_2)$ for a vertex in $ER_q$ where 
$(x_0 , x_1 , x_2)$ is a nonzero vector, and two 3-tuples represent the same vertex if 
one is a nonzero multiple of the other.  Two distinct vertices $(x_0 , x_1 , x_2)$ and $(y_0 , y_1 , y_2)$  
are adjacent if 
\[
x_0^{ \sqrt{q} } y_0 + x_1^{ \sqrt{q} } y_1 + x_2^{ \sqrt{q} } y_2 = 0.
\]
Despite this relatively simple algebraic condition for adjacency, we were unable to find a triangle-free induced 
subgraph of $U_q$ with $\frac{1}{2}q^2 - o(q^2)$ vertices.  In general, we conclude 
our introduction with the following question which generalizes one asked in \cite{mw}.  

\begin{question}
Given a projective plane $\Pi$ of order $q$ and a polarity $\theta$ of $\Pi$, is it always possible to find a triangle-free subgraph 
of $G( \Pi , \theta )$ with $\frac{1}{2}q^2 - o (q^2 )$ vertices?
\end{question}

The rest of this note is organized as follows.  In Section \ref{section 2} we prove Theorem \ref{main theorem}.
In Section \ref{section 3} we discuss some of our computational results and make some additional remarks.


\section{Proof of Theorem 1.2}\label{section 2}

Throughout this section, $\Pi$ is a projective plane of order $q$, $\theta$ is a polarity of $\Pi$, 
and $G( \Pi , \theta )$ is the corresponding polarity graph.  

The first lemma is known but a proof is included for completeness. 
 
\begin{lemma}\label{lemma 1}
No absolute point of $\theta$ is in a triangle in $G(\Pi,\theta)$.
\end{lemma}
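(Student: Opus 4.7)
The plan is to argue by contradiction: assume an absolute point $p$ is contained in a triangle of $G(\Pi,\theta)$, with the other two vertices being $q$ and $r$, and show that the edges of the triangle, combined with absoluteness of $p$, force $p$ and $r$ to share the same polar line and hence coincide.

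First I would translate the hypotheses into incidences. Absoluteness of $p$ means $p\,\mathcal{I}\,\theta(p)$. The three edges of the triangle unpack, by the definition of adjacency in $G(\Pi,\theta)$, to $p\,\mathcal{I}\,\theta(q)$, $p\,\mathcal{I}\,\theta(r)$, and $q\,\mathcal{I}\,\theta(r)$. Next I would invoke the defining property of a polarity, $p\,\mathcal{I}\,l \iff \theta(l)\,\mathcal{I}\,\theta(p)$, together with $\theta^{2}=\mathrm{id}$, to rewrite the incidence $p\,\mathcal{I}\,\theta(q)$ as $q\,\mathcal{I}\,\theta(p)$.

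The key combinatorial step is then to assemble these incidences to pin down two lines through the pair $\{p,q\}$: from $p\,\mathcal{I}\,\theta(p)$ and $q\,\mathcal{I}\,\theta(p)$ we see that both $p$ and $q$ lie on $\theta(p)$; from $p\,\mathcal{I}\,\theta(r)$ and $q\,\mathcal{I}\,\theta(r)$ we see that both $p$ and $q$ lie on $\theta(r)$. Because $p\neq q$, the unique-line-through-two-points axiom of $\Pi$ gives $\theta(p)=\theta(r)$, and because $\theta$ is a bijection this forces $p=r$, contradicting that $\{p,q,r\}$ are three distinct vertices.

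There is no substantive obstacle here, since the argument is a short unwinding of definitions. The one place requiring care is applying the polarity property in the correct direction --- that is, using $\theta^{2}=\mathrm{id}$ to convert $p\,\mathcal{I}\,\theta(q)$ into $q\,\mathcal{I}\,\theta(p)$ --- after which the projective-plane axioms close the argument immediately.
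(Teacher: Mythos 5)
Your proof is correct and is essentially the same as the paper's: both identify $\theta(p)$ and $\theta(r)$ as two distinct-seeming lines through the distinct points $p$ and $q$, and use the uniqueness of the line through two points plus injectivity of $\theta$ to force $p=r$. The only (cosmetic) caution is that your choice of $q$ for a vertex clashes with the paper's use of $q$ for the order of the plane.
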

\begin{proof}
Suppose $p_1$ is an absolute point that lies in a triangle and the other vertices of the triangle are $p_2$ and $p_3$.
It must be the case that all three of $p_1$, $p_2$, and $p_3$ are incident to $\theta ( p_1)$.  
However, $p_1$ is incident to $\theta (p_3)$ and $p_2$ is incident to $\theta (p_3)$.  
As the line through any pair of points is unique, $\theta (p_1) = \theta (p_3)$ which implies 
$p_1 = p_3$, a contradiction.  
\end{proof}

\begin{lemma}\label{lemma 2}
If $p$ is a vertex of $G(\Pi,\theta)$ and p is not an absolute point of $\theta$, then the vertices adjacent to $p$ can be partitioned into two sets $A_p$ and $B_p$ such that

\begin{enumerate}
\item the set $A_p$ is a (possibly empty) subset of the absolute points of $\theta$, and
\item the vertices in $B_p$ induce a matching in $G(\Pi, \theta)$, and no vertex in $B_p$ is an absolute point of $\theta$.
\end{enumerate}
\end{lemma}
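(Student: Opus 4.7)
The plan is to describe the neighborhood of $p$ geometrically and then exploit the fact that two distinct points of a projective plane lie on a unique line. First I would observe that by the defining property of a polarity, a vertex $p'$ is adjacent to $p$ in $G(\Pi,\theta)$ if and only if $p \mathcal{I} \theta(p')$, which is equivalent to $p' \mathcal{I} \theta(p)$. Hence the neighbors of $p$ are precisely the points of $\Pi$ incident with the line $\theta(p)$, other than $p$ itself. Since $p$ is not an absolute point, $p$ does not lie on $\theta(p)$, so $p$ has exactly $q+1$ neighbors, all of them on $\theta(p)$.

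Given this description, I would define $A_p$ to be the set of absolute points among these neighbors and $B_p$ to be the remaining non-absolute neighbors. Condition (1) is then immediate from the definition of $A_p$, and by construction no vertex of $B_p$ is an absolute point of $\theta$.

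To verify the matching condition in (2), I would argue that no vertex of $B_p$ has more than one neighbor in $B_p$. Suppose for contradiction that some $p_2 \in B_p$ has two distinct neighbors $p_1, p_3 \in B_p$. Then each of $p_1, p_3$ lies on $\theta(p_2)$ (because they are adjacent to $p_2$) and also on $\theta(p)$ (because they belong to $B_p \subseteq N(p)$). Since two distinct points of $\Pi$ determine a unique line, this forces $\theta(p_2) = \theta(p)$, and since $\theta$ is a bijection we conclude $p_2 = p$, contradicting $p_2 \in B_p$. Thus the induced subgraph on $B_p$ has maximum degree at most one, i.e.\ is a matching. There is no real obstacle here beyond translating the adjacency relation into a point-line incidence via the polarity identity; once this is done, the uniqueness of the line through two points handles the rest.
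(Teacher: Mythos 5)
Your description of the neighborhood of $p$ as the points of the line $\theta(p)$ is correct, and your uniqueness argument is a clean (arguably cleaner than the paper's) way to see that each vertex of $B_p$ has \emph{at most} one neighbor in $B_p$. However, there is a genuine gap: you prove only that the subgraph induced on $B_p$ has maximum degree at most one, whereas the lemma, as it is used in the proof of Theorem \ref{main theorem}, requires that this induced subgraph is a perfect matching on $B_p$, i.e.\ that every vertex of $B_p$ has \emph{exactly} one neighbor in $B_p$. The existence half is what later justifies the bound $d_J(v) \leq |B_v|/2$: the neighbors of $v$ inside a triangle-free set $J$ form an independent set in the graph induced on $B_v$, and such an independent set is guaranteed to have size at most $|B_v|/2$ only when the matching covers all of $B_v$; any isolated vertices of that induced graph could all be put into $J$.

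The missing half is exactly what the paper's triangle construction supplies. Take $p_i \in B_p$ and let $l$ be the unique line through $p$ and $p_i$. Every line through $p$ has the form $\theta(p_j)$ for some neighbor $p_j$ of $p$ (if $l = \theta(p)$ then $p$ would be absolute, so the point $p_j$ with $\theta(p_j) = l$ is not $p$ and is therefore adjacent to $p$). Since $p_i \mathcal{I} l = \theta(p_j)$, the vertex $p_i$ is adjacent to $p_j$; moreover $p_j \neq p_i$ (otherwise $p_i \mathcal{I} \theta(p_i)$ would make $p_i$ absolute, contradicting $p_i \in B_p$), and $p_j$ is not an absolute point by Lemma \ref{lemma 1} applied to the triangle on $p, p_i, p_j$, so $p_j \in B_p$. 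Combining this with your degree bound shows every vertex of $B_p$ is matched to exactly one other vertex of $B_p$, which is the statement actually needed.
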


\begin{proof}
Since $\Pi$ has order $q$, there are exactly $q+1$ lines that $p$ is incident to.
These lines can be written as 
$ \theta (p_1) , \theta (p_2) , \dots , \theta (p_{q+1})$ for some $p_1,p_2,\dots,p_{q+1}\in \mathcal{P}$.
By definition, we have that $p$ is adjacent to $p_1,p_2,\dots,p_{q+1}$ 
in the graph $G(\Pi,\theta)$. Note that no $p_i$ is equal to $p$ since $p$ is not an absolute point.
By relabeling if necessary, we may assume that 
$p_1,p_2,\dots,p_c$ are not absolute points, and that $p_{c+1},p_{c+2},\dots,p_{q+1}$ are absolute points.
Let $A_p=\{p_{c+1},p_{c+2},\dots,p_{q+1}\}$ and $B_p=\{p_1,p_2,\dots,p_c\}$.
We have that $A_p$ is a subset of the absolute points and that $B_p$ contains no absolute points.
To finish the proof of the lemma, we must show that the vertices in $B_p$ induce a matching.

Let $p_i\in B_p$ so $p$ is incident to $\theta  (p_i)$. 
There is exactly one line $l\in \mathcal{L}$ such that $p$ and $p_i$ are both incident to $l$. 
There must be a $j \in \{1,2, \dots , q + 1 \}$ such that $l = \theta ( p_j)$ and so 
$p$, $p_i$, and $p_j$ form a triangle.  
By Lemma \ref{lemma 1}, $p_j$ cannot be an absolute point so $j\in\{1,2,\dots ,c\}$.
If $j=i$, then $p_i$ is an absolute point, but $p_i\in B_p$ and $B_p$ contains no absolute points. 
Therefore, $j\neq i$ and the vertices $p$, $p_i$, and $p_j$ are all distinct. Because there is exactly one line $l$
with both $p$ and $p_i$ incident to $l$, $p_i$ uniquely determines $p_j$ and so the vertices in $B_p$ induce a matching.

\end{proof}

The next result is the well-known Expander Mixing Lemma.    

\begin{theorem}\label{expander}
Let $G$ be a $d$-regular graph, possibly with
loops where a loop adds one to the degree of a vertex. If $\lambda_{1}\geq\lambda_{2}\geq...\geq\lambda_{n}$
are the eigenvalues of the adjacency matrix of $G$ and $\lambda=max_{2\leq i\leq n}|\lambda_{i}|$,
then for any sets $X,Y\subseteq V(G)$,
\[
\left| e(X,Y)-\frac{d|X||Y|}{n}\right| \leq\lambda\sqrt{|X||Y|}
\]
where $e(X,Y)=|\{(x,y)\in X\times Y:\{x,y\}\in E(G)\}|$.
\end{theorem}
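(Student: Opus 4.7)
The plan is the standard spectral-decomposition argument. Let $A$ be the adjacency matrix of $G$, using the convention that a loop at vertex $v$ contributes $1$ to $A_{vv}$, so $A$ remains symmetric and the $d$-regularity of $G$ translates to $A \mathbf{1} = d \mathbf{1}$. In particular $\lambda_{1} = d$ with eigenvector $v_{1} = \mathbf{1}/\sqrt{n}$. Since $A$ is real symmetric, extend $v_{1}$ to an orthonormal basis $v_{1}, v_{2}, \ldots, v_{n}$ of $\mathbb{R}^{n}$ with $A v_{i} = \lambda_{i} v_{i}$.

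The first step is to verify the identity
\[
e(X,Y) = \chi_{X}^{T} A \, \chi_{Y},
\]
where $\chi_{X}, \chi_{Y} \in \{0,1\}^{n}$ are the indicator vectors of $X$ and $Y$; this is immediate from the definition $e(X,Y) = |\{(x,y) \in X \times Y : \{x,y\} \in E(G)\}|$ together with $A_{xy} \in \{0,1\}$, and the loop convention makes the identity hold even when $X \cap Y \neq \emptyset$. Next I would expand $\chi_{X} = \sum_{i} \alpha_{i} v_{i}$ and $\chi_{Y} = \sum_{i} \beta_{i} v_{i}$. Since $v_{1} = \mathbf{1}/\sqrt{n}$, one reads off $\alpha_{1} = |X|/\sqrt{n}$ and $\beta_{1} = |Y|/\sqrt{n}$, while Parseval gives $\sum_{i} \alpha_{i}^{2} = |X|$ and $\sum_{i} \beta_{i}^{2} = |Y|$.

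Using orthonormality of the $v_{i}$ and $A v_{i} = \lambda_{i} v_{i}$,
\[
\chi_{X}^{T} A \, \chi_{Y} = \sum_{i=1}^{n} \lambda_{i} \alpha_{i} \beta_{i} = \frac{d |X| |Y|}{n} + \sum_{i=2}^{n} \lambda_{i} \alpha_{i} \beta_{i}.
\]
Applying $|\lambda_{i}| \leq \lambda$ for $i \geq 2$ and the Cauchy--Schwarz inequality to the tail gives
\[
\left| \sum_{i=2}^{n} \lambda_{i} \alpha_{i} \beta_{i} \right| \leq \lambda \left( \sum_{i=2}^{n} \alpha_{i}^{2} \right)^{1/2} \left( \sum_{i=2}^{n} \beta_{i}^{2} \right)^{1/2} \leq \lambda \sqrt{|X| \cdot |Y|},
\]
and combining the two displays yields the claimed bound. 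There is no serious obstacle, since this is a classical result; the only point that requires a moment of care is the loop convention, which is exactly what keeps $A$ symmetric, forces $\mathbf{1}$ to be an eigenvector with eigenvalue $d$, and preserves the identity $e(X,Y) = \chi_{X}^{T} A \chi_{Y}$ regardless of whether $X$ and $Y$ overlap.
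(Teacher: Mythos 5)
The paper states this result as the well-known Expander Mixing Lemma and supplies no proof of its own, so there is nothing internal to compare against. Your spectral-decomposition argument is the standard and correct one, including the key observation that the convention $A_{vv}=1$ for a loop keeps every row sum equal to $d$ (so $\mathbf{1}$ is an eigenvector with eigenvalue $d$) and makes the identity $e(X,Y)=\chi_X^{T}A\chi_Y$ valid even when $X$ and $Y$ overlap.
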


Let $G^{\circ}(\Pi,\theta)$ be the graph obtained from $G(\Pi,\theta)$ by adding one loop to each absolute point. 
It is known that the eigenvalues of $G^{\circ}(\Pi,\theta)$ are $q+1$ with multiplicity $1$, 
and all others have magnitude at most $\sqrt{q}$. 
For any subset of vertices $J\subseteq V(G^{\circ}(\Pi,\theta))$, we have by Theorem \ref{expander},
\[
\left| e(J,J)-\frac{(q+1)|J|^2}{q^2+q+1}\right| \leq\sqrt{q}|J|.
\]

We now have all of the tools that we need in order to prove Theorem \ref{main theorem}.

\bigskip

\begin{proof}[Proof of Theorem \ref{main theorem}]
Let $J \subset V( G ( \Pi , \theta ) ) $ and assume that $J$ contains no absolute points, and that the subgraph induced 
by $J$ contains no triangles.  Since $J$ contains no absolute points, the number of edges in $G( \Pi , \theta )$ whose endpoints are 
in $J$ is the same as the number of edges in $G^{ \circ} ( \Pi , \theta )$ whose endpoints are in $J$.  
By Theorem \ref{expander},
\begin{equation}\label{eq 1}
e(J,J) \geq \frac{ (q + 1) |J|^2 }{ q^2 + q + 1} - \sqrt{q} |J| .
\end{equation}
Note that $e(J,J)=\Sigma_{v\in J}d_J(v)$, where $d_J(v)$ is the number of neighbors of $v$ in $J$.
Let $v \in J$.  
By Lemma \ref{lemma 2}, since $J$ contains no absolute points, all of the vertices adjacent to $v$ are contained in $B_v$, and 
therefore induce a matching in $G( \Pi , \theta )$.
Since $J$ contains no triangles, $d_J(v)\leq\frac{|B_v|}{2}\leq\frac{q+1}{2}$.  Combining this inequality with 
(\ref{eq 1}), we get that 
\[
\frac{ (q + 1) |J|^2 }{ q^2 + q + 1} - \sqrt{q} |J|  \leq e(J,J) = 
\sum_{ v \in J } d_J (v) \leq |J| \left( \frac{q + 1}{2} \right).
\]
Solving this inequality for $|J|$ yields 
\[
|J| \leq \frac{1}{2} ( q^2 + q + 1) + \sqrt{q} \left( \frac{ q^2 + q + 1}{q + 1} \right)
\]
completing the proof of the theorem.  
\end{proof}


\section{Concluding Remarks}\label{section 3}

We begin this section by giving a brief description of how our computational results were obtained.  A close look at the proof 
of Theorem \ref{main theorem} suggests that one way to find a large set $J$ that induces a triangle-free graph 
is to choose an independent set $I$ of size $q$, and then for each vertex $v \in I$, we choose one vertex from each triangle in the neighborhood $v$ and put it into $J$.  This would give a set of size about $\frac{1}{2}q^2$ which is the size we are aiming 
for, but of course we need to avoid triangles.  This is the main difficulty.        
Our lower bounds for $q \geq 7$ are, more or less, obtained by following this approach.  More 
details are provided in \cite{file}.  

In any polarity graph $G( \Pi , \theta )$, the neighborhood of a vertex induces a graph of maximum degree 1,
otherwise we find a cycle of length four.  
If $\Pi$ has order $q$, then this provides a trivial lower bound of $\frac{1}{2} ( q + 1)$ 
on the number of vertices in an induced triangle-free subgraph but there may be absolute points in this set.  
Regardless, this lower bound can be 
improved by considering the hypergraph $\mathcal{H}( \Pi , \theta )$ whose vertex set is the vertices of 
$G( \Pi , \theta )$ that are not absolute points.  The edges of $\mathcal{H}( \Pi , \theta )$
are the triangles in $G( \Pi , \theta )$.  Since a polarity has at most $q^{3/2} + 1$ absolute points (see \cite{hp}), 
$\mathcal{H} ( \Pi , \theta )$ has at least $q^2 + q - q^{3/2}$ vertices.  Furthermore, 
each vertex in $G( \Pi , \theta )$ is in at most $\frac{q+1}{2}$ triangles and no two triangles share an edge.
This implies that $\mathcal{H} ( \Pi , \theta )$ has maximum degree $\frac{q+1}{2}$ and maximum codegree 1.  
By a result of Duke, Lefmann, and R\"{o}dl \cite{duke}, there is  positive constant $c$, not depending on $\Pi$ or $\theta$, such 
that the independence number of $\mathcal{H} ( \Pi , \theta )$ is at least $ c q^{3/2} \sqrt{ \log q }$.  By definition, 
such a set induces a triangle-free graph in $G( \Pi , \theta )$.  This argument was pointed out to the second author 
by Jacques Verstra\"{e}te.

In the search for induced triangle-free graphs, a related problems arose.  Consider the graph $ER_q$ where 
$q$ is a power of an odd prime.  The vertices of $ER_q$ can be partitioned into three sets: the absolute points, 
the vertices that are adjacent to at least one absolute point, and the vertices that are not adjacent to 
any absolute points.  This is proved in \cite{parsons} and \cite{w thesis}.   
Let us call these sets $A_q$, $S_q$, and $E_q$, respectively.  
When $q \equiv 1 (\textup{mod}~4)$, the subgraph induced by $E_q$ is triangle-free, 
and when $q \equiv 3 ( \textup{mod}~4)$, the subgraph induced by $S_q$ is triangle-free.
This is the construction of Parsons \cite{parsons} which shows Conjecture \ref{mw conjecture}, if true, would be 
best possible.  One can ask if this property characterizes $PG(2,q)$.  That is, suppose $G( \Pi , \theta )$ is a polarity graph 
for which the vertex set admits a partition into three sets 
consisting of the absolute points of $\theta$, the neighbors of the absolute points (which we denote by $S$), 
and the vertices not adjacent to absolute points (which we denote by $E$).  If the subgraph induced by $S$ or 
by $E$ is triangle-free, then must $\Pi = PG(2,q)$ and $\theta$ be an orthogonal polarity of $PG(2,q)$?    



\begin{thebibliography}{10}
 
 \bibitem{aksv}
P.\ Allen, P.\ Keevash, B.\ Sudakov, J.\ Verstra\"{e}te,
Tur\'{a}n numbers of bipartite graphs plus an odd cycle,
{\em J.\ Combin.\ Theory Ser.\ B} 106 (2014), 134--162.  
 
 \bibitem{bs}
M.\ Bachrat\'{y}, J.\ \v{S}ir\'{a}\v{n},
{\em Polarity graphs revisited},
Ars Math.\ Contemp.\ 8 (2015), no.\ 1, 55-67.  
 
 \bibitem{baer}
 R.\ Baer,
 Polarities in finite projective planes, 
 {\em Bull.\ Amer.\ Math.\ Soc}.\ {\bf 52}, (1946). 77--93.  
 
 \bibitem{brown}
 W.\ G.\ Brown,
 On graphs that do not contain a Thomsen graph, 
 {\em Canad.\ Math.\ Bull}.\ {\bf 9} 1966 281--285.  
 
 \bibitem{duke}
 R.\ Duke, H.\ Lefmann, V.\ R\"{o}dl, 
 On uncrowded hypergraphs, 
 {\em Random Structures Algorithms} 6 (1995), no.\ 2-3, 209--212.  
 
 \bibitem{demb}
P.\ Dembowski,
{\em Finite Geometries},
Springer-Verlag Berlin Heidelberg, Germany, 1968.  
 
 \bibitem{erdos conjecture}
P.\ Erd\H{o}s, 
Some recent progress on extremal problems in graph theory, 
{\em Congr.\ Numer}.\ {\bf 14} (1975), 3-14.  

 
\bibitem{er1}
P.\ Erd\H{o}s, A.\ R\'{e}nyi,
On a problem in the theory of graphs. 
(Hungarian) {\em Magyar Tud.\ Akad.\ Mat.\ Kutat\'{o} Int.\ K\"{o}zl}.\ 
{\bf 7} 1962 623--641 (1963).  
 
\bibitem{ers}
P.\ Erd\H{o}s, A.\ R\'{e}nyi, V.\ T.\ S\'{o}s,
On a problem of graph theory, 
{\em Studia Sci.\ Math.\ Hungar}.\ {\bf 1} 1966 215--235.

\bibitem{fur}
Z.\ F\"{u}redi,
On the number of edges of quadrilateral-free graphs, 
{\em J.\ Combin.\ Theory Ser.\ B} 68 (1996), no.\ 1, 1--6.  

\bibitem{godsil}
C.\ Godsil, M.\ Newman,
Eigenvalue bounds for independent sets, 
{\em J.\ Combin.\ Theory Ser.\ B} 98 (2008), no.\ 4, 721--734.  

\bibitem{hobart}
S.\ Hobart, J.\ Williford, 
The independence number for polarity graphs of even order planes, 
{\em J.\ Algebraic Combin}.\ 38 (2013), no.\ 1, 57--64.  

\bibitem{hp}
D.\ R. Hughes, F.\ C.\ Piper,
{\em Projective Planes},
GTM Vol.\ 6, Springer-Verlag New-York-Berlin, 1973.  

\bibitem{lv}
F.\ Lazebnik, J. Verstra\"{e}te,
On hypergraphs of girth five,
{\em Electron.\ J.\ Combin}.\ 10 (2003), \#R25.

\bibitem{file}
J.\ Loucks, C.\ Timmons, 
Supporting Mathematica notebook file available at 
http://webpages.csus.edu/craig.timmons/papers

\bibitem{mps}
S.\ Mattheus, F.\ Pavese, L.\ Storme,
On the independence number of graphs related to a polarity,
arXiv:1704.00487v1 3 Apr 2017.

\bibitem{mw}
D.\ Mubayi, J.\ Williford,
On the independence number of the Erd\H{o}s-R\'{e}nyi and projective norm graphs and a related hypergraph,
{\em J.\ Graph Theory} 56 (2007), no.\ 2, 113-127.

\bibitem{parsons}
T.\ D.\ Parsons, 
Graphs from projective planes, 
{\em Aequationes Math}.\ 14 (1976), no.\ 1-2, 167-189.  

\bibitem{ptt}
X.\ Peng, M.\ Tait, C.\ Timmons,
On the chromatic number of the Erd\H{o}s-R\'{e}nyi orthogonal polarity graph, 
{\em Electron.\ J.\ Combin}.\ 22 (2015), no.\ 2, Paper 2.21, 19 pp.  


\bibitem{w thesis}
J.\ Williford, 
{\em Constructions in finite geometry with applications to graphs},
PhD Thesis, University of Delaware, 2004.  

\bibitem{mathematica}
Wolfram Research, Inc., Mathematica, Version 11.0, Champaign, IL (2016).  

\end{thebibliography}
\end{document}